%% file: kinetic_trace_estimate_arxiv.tex
\documentclass[11pt,a4paper]{article}
\usepackage[T1]{fontenc}
\usepackage[utf8]{inputenc}
\usepackage{lmodern}
\usepackage{amsmath,amssymb,amsthm,mathtools}
\usepackage{geometry}
\usepackage{xcolor}
\usepackage{hyperref}
\usepackage{tikz} 
\usetikzlibrary{calc}

\newcommand{\RR}{\mathbb{R}}

\newcommand{\R}{\mathcal{R}}

\newcommand{\Sd}{\mathbb{S}^{d-1}}
\newcommand{\Rd}{\RR^d}
\newcommand{\A}{\mathcal{A}}

\newcommand{\gradR}{\nabla_r}
\newcommand{\dd}{\mathrm{d}}

\newcommand{\moll}{\mu}
\newcommand{\Gammain}{\Gamma_{\mathrm{in}}}

\def\hyp{\mathrm{hyp}}
\newcommand{\Hyp}{H^1_{\hyp}(\R;\Sd)}

\DeclareMathOperator{\dist}{dist}

\theoremstyle{plain}
\newtheorem{lemma}{Lemma}[section]
\newtheorem{theorem}{Theorem}
\newtheorem{proposition}[lemma]{Proposition}
\theoremstyle{definition}
\newtheorem{remark}[lemma]{Remark}

\title{A trace theorem for 
spherical kinetic Sobolev spaces \\on $\rho$-convex domains}
\author{%
    Herbert Egger%
    \thanks{Institute for Numerical Mathematics, Johannes Kepler University Linz and Johann Radon Institute for Computational and Applied Mathematics, Altenbergerstr.~69, 4040 Linz, Austria.  
    {\small\textit{e-mail}:} {\small \texttt{herbert.egger@jku.at}}}
    \and
    Matthias Schlottbom%
    \thanks{Department of Applied Mathematics, University of Twente, P.O. Box 217, 7500 AE Enschede, The Netherlands.  
    {\small\textit{e-mail}:} {\small \texttt{m.schlottbom@utwente.nl}}}}
\date{}

\begin{document}
\maketitle

\begin{abstract}
Trace theorems are an indispensable tool for the analysis of kinetic equations. They have been established in wide generality by Cessenat and co-workers for radiative transfer and related applications. 
A variety of trace estimates have been established for the kinetic Fokker-Planck and Kolmogorov equation, typically requiring smoothness of the underlying domain; see the recent survey by Niebel \& Valentini.
In this work, we prove a new trace estimate for kinetic Sobolev spaces over the sphere for $\rho$-convex domains which, in general, may have a non-smooth boundary. Similar to the work of Cessenat, we use characteristics to obtain trace estimates in weighted trace spaces with explicit constants. For completeness, we also present a density result for the corresponding function spaces on Lipschitz domains. 
\end{abstract}

\section{Introduction}
\label{sec:introduction}
Kinetic Fokker--Planck equations arise as forward-peaked approximations of Boltzmann transport and have important applications in particle transport and radiotherapy, including dose calculation and treatment planning \cite{Bedford2023,Hensel2006}. 
In MRI-guided radiotherapy, the transport of charged particles in strong magnetic fields provides further motivation for such models \cite{dePooter2021,StAubin2016}.
Lions' representation theorem, see, e.g. \cite{Arendt2023},
 has been used to establish existence for kinetic Fokker--Planck equations,
 see, e.g., \cite{BalPalacios2020,Carrillo1998,Herty2012}.
The corresponding uniqueness arguments, however, require the existence of traces for functions in kinetic Sobolev spaces and
ultimately rely on the assertion that smooth functions vanishing on the outflow boundary are dense in the associated graph space, 
which remains an open problem; see \cite[Appendix~A]{ArmstrongMourrat2021}.
A rigorous variational treatment of the corresponding boundary value problems therefore requires an appropriate trace theory for the underlying kinetic Sobolev spaces, which is the focus of this work.

\subsection*{Kinetic Sobolev space over the sphere}
Let $\R\subset\Rd$ be a bounded domain and $\Sd\subset\Rd$ the unit sphere in dimension $d \ge 2$. 
A natural function space arising in the study of kinetic Fokker--Planck equations and related models is the spherical kinetic Sobolev spaces~\cite{AlbrittonEtAl24,Valentini2026}
\begin{align*}
    \Hyp=\{v\in L^2(\R;H^1(\Sd)): s\cdot\gradR v\in L^2(\R;H^{-1}(\Sd))\}.
\end{align*}
Here $H^1(\Sd)$ is the space of square-integrable functions on $\Sd$ with square-integrable weak gradient, 
$H^{-1}(\Sd)=H^1(\Sd)^*$ is its dual, and $L^2(\R;X)$ denotes the Bochner space of square-integrable functions from $\R$ to some Hilbert space $X$. The graph norm, defined by 
\begin{align*}
    \|v\|_{\hyp}^2=\|v\|_{L^2(\R;H^1(\Sd))}^2+\|s\cdot\gradR v\|_{L^2(\R;H^{-1}(\Sd))}^2,
\end{align*}
together with the associated inner product,
renders $\Hyp$ a Hilbert space. Variational methods for kinetic equations based on such spaces have been studied, e.g., in \cite{AlbrittonEtAl24,Auscher2024,BES25,BrunkenSmetana22}. 

\subsection{A kinetic trace theorem for \texorpdfstring{$\rho$}{rho}-convex domains}

Trace estimates for functions in $\Hyp$ are a subtle issue, and usually, a certain regularity of the domain $\R$ is required; see \cite{Mischler00,NV26}. 
Below, we provide a trace theorem that applies to a specific class of potentially non-smooth domains. 
Recall that a domain $\R \subset \RR^d$ is \emph{$\rho$-convex}, if there exists a radius $\rho>0$ and a set $\A\subset\Rd$ such that
\begin{align}     \label{eq:r-convex}
    \overline{\R}=\bigcap_{a\in \A}\overline{B(a,\rho)}.
\end{align}
Here $B(a,\rho)$ is the open ball of radius $\rho$ centered at $a$.
Note that $\rho$-convex domains are convex and bounded, but may have non-smooth boundaries; e.g., the intersection of two balls. 
In particular, the unit outward normal vector $n(r)$ exists for $\mathcal{H}^{d-1}$-almost every $r\in\partial \R$.
We refer to \cite{FrankowskaOlech81,NacryThibault24} for further details on $\rho$-convexity and related notions of convexity. By 
\begin{align}
    \tau_\pm(r,s)=\inf\{t>0: r \pm ts\notin\R\}
    \label{eq:taup}
\end{align}
we denote the \emph{exit time}, i.e., the distance of the point $r$ to the boundary $\partial \R$, in direction $\pm s$. Correspondingly, $\tau(r,s) = \tau_+(r,s) + \tau_-(r,s)$ denotes the length of the segment of the line $r + t s$ that lies inside $\overline \R$; 
\begin{figure}[ht!]
\centering
\input{r-convex.tikz}
\caption{Typical examples of $\rho$-convex domains: the ball $B(a,\rho)$ and the Reuleaux triangle, which is the intersection of three balls $B(a_1,\rho) \cap B(a_2,\rho) \cap B(a_3,\rho)$ of radius $\rho$ centered at the vertices of a triangle. The left plot further illustrates the exit times $\tau_\pm$ and the exit points $r_\pm$ for the line $r+t s$ passing through $r$ in direction $s$.}
\label{fig:r-convex}
\end{figure}
see \cite[Sec.~2.1]{Agoshkov1998} or \cite[Ch.~XXI, App. §~2] {DautrayLions6} and Figure~\ref{fig:r-convex} for illustration. 
As the main contribution of this note, we will establish the following result. 
\begin{theorem}[Trace theorem for $\Hyp$]\label{thm:trace} $ $\\
Let $\R$ be a $\rho$-convex domain. 
Then for all $v\in C^1(\overline\R\times\Sd)$, there holds 
\begin{align}\label{eq:trace}
\int_{\partial\R \times \Sd} |v(r,s)|^2 \, \tau(r,s) \, |s\cdot n(r)| \, \dd(r,s) 
\leq 2 \,(1+4\rho) \, \|v\|_{\hyp}^2.
\end{align}
By continuity and density, the trace mapping $\gamma : C^1(\overline\R\times\Sd) \to L^2(\partial\R \times \Sd;\tau|s\cdot n|)$ can be extended as a linear continuous operator to $\Hyp$ and  \eqref{eq:trace} remains valid.
\end{theorem}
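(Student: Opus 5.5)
The plan is to follow Cessenat's characteristic argument and then close the estimate with the $H^{-1}$--$H^1$ duality in the angular variable. Fix $s\in\Sd$ and a line $r_-+ts$, $t\in(0,\tau)$, entering $\overline\R$ at the inflow point $r_-$ and leaving at $r_+=r_-+\tau s$. For $w(t)=|v(r_-+ts)|^2$ with $v\in C^1(\overline\R\times\Sd)$ I will use the two elementary identities
\begin{align*}
\tau\, w(\tau) = \int_0^\tau \frac{\dd}{\dd t}\bigl(t\,w(t)\bigr)\,\dd t,
\qquad
\tau\, w(0) = -\int_0^\tau \frac{\dd}{\dd t}\bigl((\tau-t)\,w(t)\bigr)\,\dd t .
\end{align*}
Adding them gives $\tau\,(w(0)+w(\tau)) = \int_0^\tau \bigl(2w + (2t-\tau)\,w'\bigr)\,\dd t$. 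Along the line we have $t=\tau_-$ and $\tau-t=\tau_+$, so $2t-\tau=\tau_--\tau_+$. Moreover $w' = 2v\, s\cdot\gradR v$.

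Next I integrate over all lines. For fixed $s$, the change of variables $r=r_-+ts$ has Jacobian $\dd r = |s\cdot n(r_-)|\,\dd\sigma(r_-)\,\dd t$, and the same holds for outflow points. Since $\R$ is convex, every line meets $\partial\R$ in at most two points, and the tangential part of the boundary is $\mathcal H^{d-1}$-null. Integrating over $s\in\Sd$ then yields
\begin{align*}
\int_{\partial\R\times\Sd} |v|^2\,\tau\,|s\cdot n|\,\dd(r,s)
= 2\|v\|_{L^2(\R\times\Sd)}^2 + 2\int_{\R}\bigl\langle s\cdot\gradR v,\ (\tau_--\tau_+)\,v\bigr\rangle_{H^{-1}(\Sd),H^1(\Sd)}\,\dd r .
\end{align*}
For smooth $v$ the last term is an honest $L^2$ pairing, which I rewrite as the duality pairing. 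It is then bounded by $2\,\|s\cdot\gradR v\|_{L^2(H^{-1})}\,\|(\tau_--\tau_+)v\|_{L^2(H^1)}$.

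The remaining task is a multiplier estimate of the form $\|(\tau_--\tau_+)v(r,\cdot)\|_{H^1(\Sd)}\le 4\rho\,\|v(r,\cdot)\|_{H^1(\Sd)}$ for a.e.\ $r$, with the exact constant adjusted to match \eqref{eq:trace}. Given such an estimate, Young's inequality gives
\begin{align*}
2\|v\|^2_{L^2} + 2\cdot 4\rho\,\|s\cdot\gradR v\|\,\|v\|_{L^2(H^1)}
\le 2\|v\|^2_{L^2} + 4\rho\bigl(\|s\cdot\gradR v\|^2+\|v\|_{L^2(H^1)}^2\bigr)
\le 2(1+4\rho)\,\|v\|_{\hyp}^2 .
\end{align*}
For the multiplier estimate, $\rho$-convexity gives $\operatorname{diam}\R\le 2\rho$, hence $|\tau_--\tau_+|\le 2\rho$. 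What is really needed is a bound on $\nabla_s(\tau_--\tau_+)$, and this is the main obstacle. For a single ball, $\nabla_s\tau_\pm$ blows up at grazing directions, so a pointwise bound cannot be expected for $\tau_\pm$ separately. My first attempt is to write $\tau_\pm(r,s)=\min_{a\in\A}\tau_\pm^{a}(r,s)$, where $\tau^a_\pm$ are the exit times from the balls $B(a,\rho)$. I would then compute $\tau_-^a-\tau_+^a=2\,s\cdot(r-a)$ explicitly for a ball, which is smooth in $s$ with gradient bounded by $2\rho$. The hope is to show, using the minimum structure, that $\tau_--\tau_+$ inherits a Lipschitz bound of order $\rho$ in $s$ a.e.

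If this pointwise route fails, I would instead integrate by parts in $s$ within the duality term and accept a weighted version of the estimate.

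Finally, the extension to $\Hyp$ follows by density of $C^1(\overline\R\times\Sd)$ (proved later in the paper), together with the continuity established above.
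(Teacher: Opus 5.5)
Your characteristic identity is correct. It is exactly the sum of the paper's identities for the weights $\tau_+$ and $\tau_-$. The duality, Young and density steps would also close the proof with room to spare, since any bound $\|(\tau_--\tau_+)w\|_{H^1(\Sd)}\le M\|w\|_{H^1(\Sd)}$ with $M\le 8\rho$ suffices.

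But everything hinges on the regularity of the multiplier in $s$, and this step you leave open. Without it, $(\tau_--\tau_+)v(r,\cdot)$ need not lie in $H^1(\Sd)$ and the $H^{-1}$--$H^1$ duality bound gives nothing. The route you sketch does not work. The infima $\tau_\pm=\inf_a\tau_\pm^a$ are in general realized by different balls; a chord of a Reuleaux triangle typically enters and leaves through different arcs. Hence $\tau_--\tau_+\neq\inf_a(\tau_-^a-\tau_+^a)$, and the formula $2\langle r-a,s\rangle$ does not transfer. If $a_-$ is active for $\tau_-$, then $\tau_--\tau_+=2\langle r-a_-,s\rangle+(\tau_+^{a_-}-\tau_+)$, so you cannot avoid controlling $\tau_+$ itself. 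Your fallback (integration by parts in $s$ with a weighted estimate) would not yield \eqref{eq:trace}.

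The missing idea is that your premise is false: for a ball, $\nabla_s\tau_\pm^a$ does \emph{not} blow up at grazing directions. Write $y=\langle r-a,s\rangle$ and $x=|(r-a)-ys|$. For $r\in B(a,\rho)$ the explicit exit-time formula gives
\[
|\nabla_s\tau_+^a(r,s)|=x\,\Bigl|1-\frac{y}{\sqrt{\rho^2-x^2}}\Bigr|\le 2x\le 2\rho ,
\]
because $x^2+y^2<\rho^2$ forces $|y|<\sqrt{\rho^2-x^2}$. Blow-up occurs for flat boundary pieces, not for balls. Since $\R\subset B(a,\rho)$ for every $a\in\A$, and an infimum of uniformly Lipschitz functions is Lipschitz with the same constant, one gets $|\nabla_s\tau_\pm|\le 2\rho$ a.e.; this is the paper's Lemma~\ref{lem:tau-p}.

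With this lemma, $|\tau_--\tau_+|\le 2\rho$ and $|\nabla_s(\tau_--\tau_+)|\le 4\rho$. Hence $\|(\tau_--\tau_+)w\|_{H^1(\Sd)}\le 6\rho\|w\|_{H^1(\Sd)}$, and your combined argument yields $(2+6\rho)\|v\|_{\hyp}^2\le 2(1+4\rho)\|v\|_{\hyp}^2$. The paper instead bounds each weight separately via $\|\tau_\pm w\|_{H^1(\Sd)}\le 2\sqrt3\rho\|w\|_{H^1(\Sd)}$ and adds the two estimates. Once the lemma is in place, combining the weights first as you do is only a harmless reorganization.
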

A detailed proof of the trace estimate \eqref{eq:trace} will be presented in Section~\ref{sec:trace}. The density result underlying the final claim will be stated below and proven in Section~\ref{sec:density}.

\begin{remark}
The weighted norm on the left hand side of \eqref{eq:trace} also arises in trace estimates for the radiative transfer equation~\cite{CessenatDautray1984}. For bounded Lipschitz domains $\R$, there holds
\begin{align}\label{eq:trace_rte}
\int_{\partial\R \times \Sd} |v(r,s)|^q \, \tau(r,s) \, |s\cdot n(r)| \, \dd(r,s) \leq C_q \, \|v\|_{W^q}^q
\end{align}
where $\|v\|_{W^q}^q= \|v\|_{L^q(\R \times \Sd)}^q + \|s \cdot \nabla v\|_{L^q(\R \times\Sd)}^q$ for $q \ge 1$, defines the graph norm of the natural function spaces arising in the analysis of radiative transfer~\cite{Agoshkov1998,DautrayLions6,ManteuffelResselStarke00}. 
The weight $\tau$ on the left hand side of this estimate can be removed by switching to weighted norms on the right hand side; see \cite{Boulanouar2009,Egger2013} for details.
In \cite{NV26}, trace estimates for kinetic Sobolev spaces of the form 
\begin{align}\label{eq:trace_nv}
\int_{\partial\R \times \Sd} |v(r,s)|^2 \,  |s\cdot n(r)|^p \, \dd(r,s) \leq C \, \|v\|_{\hyp}^2, \qquad p \ge 1,
\end{align}
have been established for domains $\R$ with $C^{1,1/(1+p)}$-regular boundary. 
Counterexamples illustrate that such estimates do, in general, not hold for less regular domains.
We will show that for $\rho$-convex domains $\tau |s \cdot n| \le c \, |s \cdot n|^2$, so our estimate \eqref{eq:trace} has a smaller weight than \eqref{eq:trace_nv} for $p \le 2$. 
Related trace estimates for velocity spaces other than $\Sd$ can be found in \cite{AlbrittonEtAl24,AvelinHou25,Mischler00,Silvestre22}. 
For a detailed account of trace theorems for kinetic equations, we refer to \cite[Sec.~1.7]{NV26}.
\end{remark}

\subsection{A density theorem}
To fully justify the second claim of Theorem~\ref{thm:trace}, we will establish the density of smooth functions in $\Hyp$ for Lipschitz domains $\R$ which slightly generalizes previous results. 
\begin{theorem}[Density in $\Hyp$]
\label{thm:density} $ $\\
Let $\R\subset\RR^d$ be a bounded Lipschitz domain. 
Then
$C^\infty(\overline\R\times\Sd)$ is dense in $\Hyp$.
\end{theorem}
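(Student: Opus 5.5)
The proof is a standard localization, translation and mollification argument in the spirit of Friedrichs and Meyers--Serrin, adapted to the anisotropic structure of $\Hyp$. The key observation is that mollifying in the spatial variable $r$ commutes with $s\cdot\gradR$, and mollifying in the angular variable commutes with $s\cdot\gradR$ only up to an error controlled in $L^2(\R;H^{-1}(\Sd))$.

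First, take a finite cover of $\overline\R$ by open sets $U_0\Subset\R$ and $U_1,\dots,U_m$. Each $U_j$ with $j\ge1$ should be a cylinder in which $\partial\R$ is a Lipschitz graph with an outward transversal direction $e_j$, so that $\R\cap U_j-te_j\Subset\R$ for small $t>0$. Let $\{\phi_j\}$ be a subordinate smooth partition of unity, depending on $r$ only. Since $s\cdot\gradR(\phi_j v)=\phi_j\, s\cdot\gradR v+(s\cdot\gradR\phi_j)v$, and the last term lies in $L^2(\R;L^2(\Sd))\subset L^2(\R;H^{-1}(\Sd))$, each $\phi_jv$ belongs to $\Hyp$. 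It therefore suffices to approximate each piece. For $j\ge1$, define the translate $v_t(r,s)=(\phi_jv)(r+te_j,s)$. It is defined on a neighbourhood of $\overline{\R\cap U_j}$, it converges to $\phi_jv$ in $\Hyp$ as $t\to0$ by continuity of translations in Bochner spaces, and it commutes with $s\cdot\gradR$.

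Second, we mollify. In space, set $v\ast_r\eta_\varepsilon$ with a standard mollifier. This commutes exactly with $s\cdot\gradR$ and converges in both $L^2(H^1)$ and $L^2(H^{-1})$. In angle, use an approximate identity on $\Sd$ that is compatible with rotations, for example
\[
K_\delta w(s)=\int_{SO(d)}\psi_\delta(Q)\,w(Q^{-1}s)\,\dd Q,
\]
where $\psi_\delta$ is concentrated near the identity; alternatively, use the heat semigroup $e^{\delta\Delta_{\Sd}}$. The map $K_\delta$ is uniformly bounded on $H^{\pm1}(\Sd)$ and converges strongly to the identity there. The resulting function is smooth in $(r,s)$.

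The main obstacle is the commutator $[s\cdot\gradR,K_\delta]$. For the rotation average one computes
\[
s\cdot\gradR K_\delta w-K_\delta(s\cdot\gradR w)=\int\psi_\delta(Q)\,\bigl((s-Q^{-1}s)\cdot\gradR w\bigr)(Q^{-1}s)\,\dd Q,
\]
where the vector $s-Q^{-1}s$ has size $O(\delta)$. However, $\gradR w$ itself is not controlled; only $s\cdot\gradR w$ is. I would therefore not apply $K_\delta$ to $v$ directly. Instead I would first mollify in $r$ at scale $\varepsilon$, so that $v_\varepsilon$ has $\gradR v_\varepsilon\in L^2(\R;H^1(\Sd))$ with norm $O(\varepsilon^{-1})$. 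Then the commutator applied to $v_\varepsilon$ is bounded in $L^2(L^2)$ by $C\delta\varepsilon^{-1}\|v\|$. Choosing $\delta=\delta(\varepsilon)=o(\varepsilon)$ makes this vanish. Combined with the strong convergence $K_\delta(s\cdot\gradR v_\varepsilon)\to s\cdot\gradR v$ in $L^2(H^{-1})$, this gives convergence of the doubly mollified translates in the graph norm.

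Finally, we sum over the patches. This yields functions in $C^\infty(\overline\R\times\Sd)$, since all the mollified pieces are smooth up to the boundary, and they are arbitrarily close to $v$ in $\|\cdot\|_{\hyp}$.
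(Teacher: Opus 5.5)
Your proposal is correct and follows essentially the paper's route: localize with a partition of unity in $r$, move the support into the interior, mollify in $r$, and only then smooth in $s$, so that the full gradient $\gradR v_\varepsilon$ (not just $s\cdot\gradR v$) is under control. The differences are minor. The paper moves inward by dilating toward the center of a ball of local star-shapedness rather than by your transversal translation, and at fixed $\varepsilon$ it uses strong continuity of $e^{\delta\Delta_s}$ on $\gradR v_{\lambda,\varepsilon}\in L^2(\R\times\Sd)$ instead of your quantitative commutator bound with $\delta=o(\varepsilon)$.

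There is one sign slip: with $e_j$ outward and $\R\cap U_j-te_j\Subset\R$, the translate that is defined near $\overline{\R\cap U_j}$ is $(\phi_jv)(r-te_j,s)$, not $(\phi_jv)(r+te_j,s)$.
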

A detailed proof of this result will be presented in Section~\ref{sec:density}. We note that $\rho$-convex domains are bounded and Lipschitz, and thus covered by the previous result. 
\begin{remark}
Density of smooth functions in kinetic Sobolev spaces has been established for different
velocity models. 
For an unrestricted velocity model, \cite[Prop.~2.2]{AlbrittonEtAl24} proves global density on bounded
$C^1$-domains; as noted in the proof of \cite[Prop.~6.1]{AlbrittonEtAl24}, the arguments used in the proof actually carry over to Lipschitz domains. \cite[Lemma~4.2]{Silvestre22} provides a weaker local variant on $C^{1,1}$-domains and
\cite[Lemma~5.2]{AvelinHou25} establishes a global result on $C^{1,1}$-domains by translated mollification.
\cite[Prop.~3.1]{BrunkenSmetana22} and \cite[Prop.~7.5]{NV26} establish 
density of smooth functions for the spherical velocity model for $C^1$-domains. 
Using the localization arguments mentioned in \cite[Sec.~6]{AlbrittonEtAl24}, these results can again be extended to Lipschitz domains. 
For completeness, we here present a detailed statement and proof of this result. 
A different strategy altogether, avoiding an independent density theorem, is to \emph{define}
the graph space as the completion of smooth functions, as done by \cite{ShengHan2013} for the
spherical model; this makes density trivial by construction but leaves the space's relation to
the transport graph space $\Hyp$ defined above implicit. 
\end{remark}

\section{Trace estimate for \texorpdfstring{$\rho$}{rho}-convex domains}
\label{sec:trace}

Recall that $\R\subset\Rd$ is a bounded domain and $\Sd\subset\Rd$ is the unit sphere. A key ingredient for our analysis is the fact that the exit times
\begin{align*}
   \tau_\pm(r,s)=\inf\{t>0: r \pm ts\notin\R\}
\end{align*}
depend smoothly on the direction $s$ if the domain $\R$ has sufficient curvature.  
A special case of the following result has been established in \cite{Corr25}. For convenience of the reader and later reference, we present the full statement and provide a self-contained proof. 
\begin{lemma} \label{lem:tau-p}
Let $\R \subset \Rd$ be a $\rho$-convex domain.
Then $\tau_\pm(r,s) \le 2 \rho$ and 
\begin{align} \label{eq:lipschitz}
|\tau_\pm(r,s) - \tau_\pm(r,s')| \le 2 \rho \, |s - s'| \quad \forall s,s' \in \Sd, \ r \in \Rd.
\end{align}
By Rademacher's theorem, this implies $|\nabla_s \tau_\pm(r,s)| \le 2 \rho$ for a.a. $r \in \Rd$ and $s \in \Sd$.
\end{lemma}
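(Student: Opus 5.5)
The plan is to reduce to a single ball, compute the exit time from a ball explicitly, bound its angular derivative, and then pass to the intersection $\R$ by taking an infimum. It suffices to treat $\tau_+$, since $\tau_-(r,s)=\tau_+(r,-s)$ and $|(-s)-(-s')|=|s-s'|$.

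\textbf{Reduction to balls.} First dispose of trivial cases. For $r\notin\overline\R$ we have $\tau_+(r,\cdot)\equiv 0$. For $r\in\overline\R$ we use \eqref{eq:r-convex} and convexity. The open segment from $r$ towards a point of $\R$ lies in $\R$. Hence $\tau_+(r,s)$ equals the exit time of the ray $r+ts$ from $\overline\R$. That exit time is
\begin{align*}
\tau_+(r,s)=\inf_{a\in\A} t_a(s),\qquad t_a(s)=\sup\{t\ge 0:\ |r+ts-a|\le\rho\}.
\end{align*}
Some care is needed for $r\in\partial\R$ with $s$ pointing outward, where both sides should vanish; I will check this directly. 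Since an infimum of functions that are bounded by $2\rho$ and $2\rho$-Lipschitz keeps both properties, it remains to prove them for each $t_a$ with $r\in\overline{B(a,\rho)}$.

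\textbf{One ball.} Put $p=r-a$, so $|p|\le\rho$. Then $t_a(s)$ is the nonnegative root of $t^2+2(p\cdot s)t+|p|^2-\rho^2=0$, namely
\begin{align*}
t_a(s)=-p\cdot s+\sqrt{D(s)},\qquad D(s)=(p\cdot s)^2+\rho^2-|p|^2 .
\end{align*}
The product of the two roots is $|p|^2-\rho^2\le 0$, so the other root $-p\cdot s-\sqrt D$ is $\le 0$. This gives $-p\cdot s\le\sqrt D$ and therefore the key inequality $t_a\le 2\sqrt D$. Together with $\sqrt D\le\rho$ this immediately yields $t_a\le 2\rho$.

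\textbf{Derivative bound.} Let $x=r+t_a s-a$ be the exit point relative to the centre, so $|x|=\rho$ and $x\cdot s=\sqrt D$. Implicit differentiation of $|x|^2=\rho^2$ along a tangent vector $\delta s\perp s$ gives
\begin{align*}
\delta t_a=-\,t_a\,\frac{x\cdot\delta s}{x\cdot s}.
\end{align*}
Using $t_a\le 2\,x\cdot s$ this becomes $|\delta t_a|\le 2|x\cdot\delta s|\le 2\rho|\delta s|$. The singular factor $1/(x\cdot s)$ at grazing directions is exactly compensated by the smallness of $t_a$ there. At $D=0$, which forces $|p|=\rho$ and $t_a=0$, I would argue directly from the closed formula.

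\textbf{Main obstacle: getting the chordal constant.} The derivative bound only gives Lipschitz continuity with respect to geodesic (angular) distance, which would cost a factor up to $\pi/2$ when converted to the chord $|s-s'|$.

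To obtain the constant $2\rho$ with respect to $|s-s'|$, I would work in the $2$-plane spanned by $s,s'$ through $r$, or its affine version through the projection of $a$. There the chord of the ball is a circle chord, and $t_a$ can be written as $-p\cdot s+\sqrt{\rho'^2-(p\cdot s^\perp)^2}$ with $\rho'\le\rho$. I would then estimate the difference $t_a(s)-t_a(s')$ directly rather than by integrating the derivative. The idea is to split it into the linear part $p\cdot(s'-s)$, bounded by $\rho|s-s'|$ after using $-p\cdot s\le\sqrt D$, and the square-root part. The square-root part is handled via $|\sqrt A-\sqrt B|\le |A-B|/(\sqrt A+\sqrt B)$ together with the inequality $t_a\le2\sqrt D$ once more.

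If this direct chordal estimate resists, the fallback is to accept the derivative bound $|\nabla_s\tau_\pm|\le 2\rho$. That bound is what is actually used later via Rademacher's theorem, and \eqref{eq:lipschitz} would then be stated with geodesic distance.
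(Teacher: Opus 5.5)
Your proposal is correct and follows the paper's route: the explicit exit time of a single ball, a $2\rho$-Lipschitz bound for each ball, then the infimum over $a\in\A$. The chordal constant you flag as the main obstacle is immediate, and this is what the paper's ``formal differentiation'' amounts to: $t_a(s)=g(p\cdot s)$ with $g(u)=-u+\sqrt{u^2+\rho^2-|p|^2}$ being $2$-Lipschitz (since $|g'(u)|=\bigl|1-u/\sqrt{u^2+\rho^2-|p|^2}\bigr|\le 2$), so differentiating in $s\in\Rd$ rather than tangentially gives $|t_a(s)-t_a(s')|\le 2|p\cdot(s-s')|\le 2\rho|s-s'|$; your square-root splitting also closes directly via $\sqrt A+\sqrt B\ge|p\cdot s|+|p\cdot s'|\ge|p\cdot(s+s')|$, so you need neither the $2$-plane reduction, nor $t_a\le 2\sqrt D$, nor the geodesic fallback.
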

\begin{proof}
It suffices to consider the case $\tau_+$; since $\tau_-(r,s)=\tau_+(r,-s)$, the results for $\tau_-$ follow in the same manner.
The domain $\R$ is contained in at least one ball $B(a,\rho)$, see~\eqref{eq:r-convex}, thus the exit time is bounded by the diameter of $B(a,\rho)$, which already shows $\tau_+(r,s) \le 2 \rho$. 
As a next ingredient, we observe that the exit time for a ball $B(a,\rho)$ can be expressed explicitly by 
\begin{align*}
\tau_{+,a}(r,s) = \sqrt{\rho^2-|r-a|^2+\langle r-a,s\rangle^2
}-\langle r-a,s\rangle;
\end{align*}
see Figure~\ref{fig:exit-time} for an illustration. 
\begin{figure}[ht!]
\centering
\input{exit-time.tikz}
\caption{
Left:
Geometric construction of $\tau_{+,a}$ 
based on the auxiliary quantities and elementary formulas $y=\langle r-a,s\rangle$, $x^2+y^2=|r-a|^2$, and $(\tau_{+,a}+y)^2+x^2=\rho^2$.
Right: Sketch of a star-shaped domain (solid) and its dilation (dotted); see Proposition~\ref{pro:density}.
}
\label{fig:exit-time}
\end{figure}
By formal differentiation of this formula and the squeezing theorem, we immediately obtain the Lipschitz estimate 
\begin{align}
|\tau_{+,a}(r,s) - \tau_{+,a}(r,s')| \le 2 \rho \, |s - s'| \quad \forall s,s' \in \Sd, \ r \in B(a,\rho).
\end{align}
By the triangle inequality, we further conclude that $\tau_{+,a}(r,s)\le\tau_{+,a}(r,s')+2\rho \,|s-s'|$ uniformly.
For a general $\rho$-convex domain $\R$, we deduce from \eqref{eq:r-convex} that
\begin{align}
\tau_+(r,s) = \inf_{a \in \A} \tau_{+,a}(r,s) \quad \forall r \in \R, \ s \in \Sd.
\end{align}
Note that $r \in B(a,\rho)$ for all $a \in \A$. By taking the infimum over the previous inequalities for balls $B(a,\rho)$, we thus obtain $\tau_+(r,s)\le\tau_+(r,s')+2\rho \,|s-s'|$. 
Interchanging $s$ and $s'$ yields the reverse inequality,
which proves the Lipschitz estimate \eqref{eq:lipschitz} as claimed.
\end{proof}

The smoothness of the exit times with respect to the direction $s$ allows us to establish trace estimates using characteristics, similar to the radiative transfer equation~\cite{Agoshkov1998, CessenatDautray1984}.  
\begin{lemma}\label{lem:trace_bound}
Let $\R \subset \Rd$, $d \ge 2$ be a $\rho$-convex domain. Then for all $v\in C^1(\overline\R\times\Sd)$
\begin{align} \label{eq:trace2}
    \int_{\partial \R \times \Sd} |v(r,s)|^2\,|s\cdot n(r)|\,\tau_\pm(r,s) \,\dd(r,s) 
    \le (1+4\rho) \, \|v\|_{\hyp}^2.
\end{align}
\end{lemma}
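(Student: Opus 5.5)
The plan is to use the method of characteristics, as for radiative transfer \cite{Agoshkov1998,CessenatDautray1984}. The Lipschitz bound of Lemma~\ref{lem:tau-p} enters only when the duality pairing on $\Sd$ is estimated. I treat the weight $\tau_-$; the case $\tau_+$ follows by replacing $s$ with $-s$.

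\emph{Step 1: reduce to the outflow boundary.} Fix $s\in\Sd$. On the inflow part $\{s\cdot n(r)<0\}$ of $\partial\R$ we have $\tau_-=0$. On the outflow part $\{s\cdot n(r)>0\}$ we have $\tau_+=0$ and hence $\tau_-=\tau$. The tangential set $\{s\cdot n=0\}$ has measure zero for the weight $|s\cdot n|$. So the left-hand side of \eqref{eq:trace2} reduces to an integral over outflow points only.

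\emph{Step 2: identity along chords.} For fixed $s$, I parametrize $\R$ by the inflow point $r_-\in\partial\R$ and the time $t\in(0,\tau(r_-,s))$, writing $r=r_-+ts$. By convexity each line meets $\R$ in a single chord, so this map is injective. Its Jacobian gives $\dd r=|s\cdot n(r_-)|\,\dd\sigma(r_-)\,\dd t$, the standard Cessenat change of variables. The same projection identifies $|s\cdot n|\,\dd\sigma$ on the inflow and outflow parts.

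Along each chord I use $\tau_-(r_-+ts,s)=t$ and apply the fundamental theorem of calculus to $t\mapsto t\,|v(r_-+ts,s)|^2$. With $r_+$ the outflow point of the chord, this gives
\[
\tau\,|v(r_+,s)|^2=\int_0^{\tau}\Big(|v|^2+2\,\tau_-\,v\,(s\cdot\gradR v)\Big)(r_-+ts,s)\,\dd t .
\]
Integrating over $r_-$ and $s$ then yields
\[
\int_{\partial\R\times\Sd}|v|^2|s\cdot n|\tau_-\,\dd(r,s)=\|v\|_{L^2(\R\times\Sd)}^2+2\int_{\R}\big\langle s\cdot\gradR v,\ \tau_-v\big\rangle_{\Sd}\,\dd r .
\]
Equivalently, this is the divergence theorem applied to $s\,\tau_-|v|^2$, using $s\cdot\gradR\tau_-=1$ in $\R$. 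The chord version avoids needing regularity of $\tau_-$ in $r$.

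\emph{Step 3: estimate the pairing.} For a.e.\ $r$, I read the inner integral as the $H^{-1}(\Sd)$--$H^1(\Sd)$ duality, which gives
\[
2\int_\R\langle s\cdot\gradR v,\tau_-v\rangle\,\dd r\le 2\,\|s\cdot\gradR v\|_{L^2(\R;H^{-1}(\Sd))}\,\|\tau_-v\|_{L^2(\R;H^1(\Sd))}.
\]
This step needs $\tau_-(r,\cdot)v(r,\cdot)\in H^1(\Sd)$, which is exactly what Lemma~\ref{lem:tau-p} supplies: $|\tau_-|\le2\rho$ and $|\nabla_s\tau_-|\le2\rho$ a.e. The product rule $\nabla_s(\tau_-v)=\tau_-\nabla_sv+v\nabla_s\tau_-$ then gives $\|\tau_-v\|_{H^1(\Sd)}\le C\rho\,\|v\|_{H^1(\Sd)}$. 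Young's inequality finally bounds the whole right-hand side by $(1+C'\rho)\|v\|_{\hyp}^2$.

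\emph{Main obstacle.} Two points remain delicate.

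First, the justification. One needs joint measurability of $\tau_-$ and the validity of the change of variables on a non-smooth convex boundary. Both should follow from convexity together with the Lipschitz continuity of $\tau_\pm$ in $s$.

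Second, and more serious, is the sharp constant $1+4\rho$. A crude product-rule bound produces a factor like $2\sqrt2\rho$ on $\|v\|_{L^2(\R;H^1(\Sd))}$. To recover $4\rho$ I would try bounding $\|\tau_-v\|_{H^1}$ more carefully, so that it is at most $2\rho\|v\|_{H^1}$ plus a term absorbed into $\|v\|^2_{L^2}$. I would then use the Young split $2ab\le 4\rho\,(a^2/(4\rho)\cdot\ldots)$ with weights chosen so the total is $(1+4\rho)(a^2+b^2)$. If that fails, I would accept a slightly larger explicit constant.
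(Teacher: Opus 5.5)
Your proposal is correct and follows the paper's proof. The paper treats $\tau_+$: it applies the fundamental theorem of calculus to $t\mapsto v(r+ts,s)^2\,\tau_+(r+ts,s)$ starting at the inflow point, integrates via the Cessenat/Choulli--Stefanov change of variables, and bounds the resulting pairing by $\|s\cdot\gradR v\|_{L^2(\R;H^{-1}(\Sd))}\,\|\tau_+ v\|_{L^2(\R;H^1(\Sd))}$ using Lemma~\ref{lem:tau-p}. You run the mirror-image argument with $\tau_-$ and the outflow point.

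The constant you flag as the main obstacle is in fact immediate. With $|\tau_-|,|\nabla_s\tau_-|\le 2\rho$, the product rule gives $\|\tau_- v\|_{H^1(\Sd)}^2\le 12\rho^2\|v\|_{L^2(\Sd)}^2+8\rho^2\|\nabla_s v\|_{L^2(\Sd)}^2$. That is a factor $2\sqrt3\,\rho$, not $2\sqrt2\,\rho$, because $\|\tau_-v\|_{L^2}$ also contributes. Set $a=\|s\cdot\gradR v\|_{L^2(\R;H^{-1}(\Sd))}$ and $b=\|v\|_{L^2(\R;H^1(\Sd))}$. The symmetric split $2\sqrt3\rho\cdot 2ab\le 2\sqrt3\rho\,(a^2+b^2)$, together with $\|v\|_{L^2(\R\times\Sd)}\le b$, yields $(1+2\sqrt3\rho)\|v\|_{\hyp}^2\le(1+4\rho)\|v\|_{\hyp}^2$. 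This is exactly how the paper concludes, so no refined bound or weighted Young split is needed.
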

\begin{proof}
Fix $r \in \partial \R$ and $s \in \Sd$, and assume $s \cdot n(r)<0$. In this case, $\tau_+(r,s)>0$ and we write $r_+=r + \tau_+ s$ for the exit point of the ray $r + t s$; see Figure~\ref{fig:exit-time}.
For given $v \in C^1(\overline \R \times \Sd)$, the function 
$\varphi(t)=v(r+ts,s)^2\,\tau_+(r+ts,s)$ is continuously differentiable with
\begin{align*}
\varphi'(t) 
= 2\,v(r+ts,s)\,(s\cdot\nabla_r v(r+ts,s))\,\tau_+(r+ts,s) - v(r+ts,s)^2.
\end{align*}
For the second term, we used that $\frac{d}{dt} \tau_+(r+ts,s)=-1$, since $|s|=1$ and the travel time decreases along the ray $r + t s$. 
Further note that $\tau_+(r_+,s)=0$, since the ray leaves $\overline \R$ at $r_+$. 
Therefore, $\varphi(\tau_+(r,s))=0$, and the fundamental theorem of calculus gives
\begin{align*}
&v(r,s)^2\,\tau_+(r,s)
= \varphi(0) = \varphi(0) - \varphi(\tau_+) 
    = -\int_0^{\tau_+(r,s)} \varphi'(t) \, \mathrm{d}t \\
&=
    \int_0^{\tau_+(r,s)}v(r+ts,s)^2 \,
    - 2 \, v(r+ts,s)\,(s\cdot\nabla_r v(r+ts,s))\,\tau_+(r+ts,s)\,\mathrm{d}t.
\end{align*}
We write $\Gamma_{in}=\{(r,s) \in \partial\R \times \Sd : s \cdot n(r)<0\}$ for the \emph{inflow part} of the phase space boundary, and recall that $\int_{\Gamma_{in}}\int_0^{\tau} f(r+ts) |s\cdot n| \,\mathrm d t\,\mathrm d(r,s) = \int_{\R\times\Sd} f(r,s)\,\mathrm d(r,s)$ for any $f\in L^1(\R\times\Sd)$; see e.g. \cite[Lem.~1]{ChoulliStefanov99}. 
Since $\tau_+(r,s)=0$ on the remaining part of the boundary, we obtain 
\begin{align*}
\int_{\partial\R \times \Sd} &|v(r,s)|^2\,|s\cdot n(r)|\,\tau_+(r,s)\,\mathrm{d}(r,s) 
= \int_{\Gamma_{in}}|v(r,s)|^2\,|s\cdot n(r)|\,\tau_+(r,s)\,\mathrm d(r,s) \\
&=\ \int_{\R\times\Sd} v(r,s)^2 \, - \,  2 \; v(r,s)\,(s\cdot\nabla_r v(r,s))\,\tau_+(r,s)\,\mathrm d(r,s) \\
   & \leq \|v\|_{L^2(\R\times\Sd)}^2 +  2\,\|s\cdot\nabla_r v\|_{L^2(\R;H^{-1}(\Sd))}\,\|v\tau_+\|_{L^2(\R;H^1(\Sd))}. 
\end{align*}
Together with Lemma~\ref{lem:tau-p}, we conclude that
$\|v \tau_+\|_{L^2(\R,H^1(\Sd))} \le 2\sqrt{3} \rho \|v\|_{L^2(\R,H^1(\Sd))}$ which yields the claimed estimate for the weight $\tau_+$ after applying Young's inequality for the product term. The opposite weight $\tau_-$ is treated similarly. 
\end{proof}

\paragraph{Proof of Theorem~\ref{thm:trace}.}
Since $\tau(r,s) = \tau_+(r,s) + \tau_-(r,s)$, the estimate \eqref{eq:trace} now follows immediately by adding up the bounds of Lemma~\ref{lem:trace_bound} for the two weights $\tau_+$ and $\tau_-$.
Since every $\rho$-convex domain is bounded and has a Lipschitz boundary, the second assertion then follows from Theorem~\ref{thm:density}
and the extension-by-continuity principle; see e.g.~\cite[E5.3]{Alt2016}. \qed

\bigskip

\begin{remark}[The weights $\tau$ and $|s\cdot n|$ are not equivalent] $ $\\
With the geometric argument used in Lemma~\ref{lem:tau-p}, one can show that 
\begin{align} 
\tau_{+,a}(r,s) = 2\rho\,|s\cdot n(r)| \qquad  \text{for all } (r,s)\in\Gammain \subset B(a,\rho).
\end{align}
This follows immediately from the explicit formula for the exit time and noting that $s\cdot n(r)<0$ on $\Gammain$. 
For a general $\rho$-convex domain $\R$ and $r \in \partial\R$, there exists an active ball $B(a,\rho)$ with $r\in\partial B(a,\rho)$ and hence $n(r)=(r-a)/\rho$. 
Since  $\tau_+(r,s)\leq \tau_{+,a}(r,s)$, we immediately obtain
\begin{align}
   \tau_+(r,s)\leq 2\rho \, |s\cdot n(r)| \qquad \text{for a.e. } (r,s)\in\Gammain.
    \label{eq:sn-lower}
\end{align}
This shows that $\tau(r,s) |s \cdot n(r)| \le 2\rho |s \cdot n(r)|^p$ for all $1 \le p \le 2$.
The reverse bound does not hold for all $\rho$-convex domains. 
Hence the trace norm in our estimate \eqref{eq:trace} is, in general, weaker than the norms arising in \eqref{eq:trace_nv} for $p \le 2$.
The questions, if trace estimates of the form \eqref{eq:trace_nv} remain valid for $\rho$-convex domains or other special classes of non-smooth domains, seems open.
\end{remark}

\section{Density of smooth functions}
\label{sec:density}

As an intermediate step, we will establish the density of smooth functions in $\Hyp$ for domains $\R$ that are \emph{star-shaped with respect to a ball $B(a_s,\rho_s)$}, i.e., 
\begin{align} \label{eq:star-shaped}
\text{for every $r\in\R$, the convex hull of $\{r\}\cup B(a_s,\rho_s)$ is contained in $\R$.}
\end{align}
%\end{definition}
%
Every bounded open convex set is star-shaped with respect to \emph{every} ball
$B(a_s,\rho_s)\subset\R$.
In particular, every $\rho$-convex domain is
star-shaped with respect to each ball it contains.
%
% \begin{figure}[ht!]
% \centering
% \input{pics/star-shaped.tikz}
% \caption{Typical examples of star-shaped domains. 
% }
% \end{figure}
%
The following theorem generalizes well-known results for domains with smooth boundaries. 
\begin{proposition}[Density in $\Hyp$]
\label{pro:density} $ $\\
Let $\R\subset\RR^d$ be a bounded domain and star-shaped with respect to some ball
$B(a_s,\rho_s)\subset\R$ with radius $\rho_s>0$.
Then
$C^\infty(\overline\R\times\Sd)$ is dense in $\Hyp$.
\end{proposition}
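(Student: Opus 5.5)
Without loss of generality we translate so that $a_s=0$ and use the dilation trick sketched in Figure~\ref{fig:exit-time} (right), followed by mollification in $r$ and in $s$. For $\lambda>1$ close to $1$ and $v\in\Hyp$, set $v_\lambda(r,s)=v(r/\lambda,s)$. This function is defined on $\lambda\R$, and star-shapedness \eqref{eq:star-shaped} gives $\overline\R\subset\lambda\R$ with a positive margin $\dist(\overline\R,\partial(\lambda\R))\ge c(\lambda-1)\rho_s$. Indeed, for $r\in\overline\R$ the ball $B(r/\lambda,\rho_s(1-1/\lambda))$ lies in the convex hull of $\{r\}\cup B(0,\rho_s)\subset\R$ (up to closure). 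The transport operator is compatible with the dilation: $s\cdot\gradR v_\lambda=\lambda^{-1}(s\cdot\gradR v)(r/\lambda,s)$, since the direction $s$ is unchanged. Hence $v_\lambda\in H^1_{\hyp}(\lambda\R;\Sd)$, and its norm restricted to $\R$ is bounded by that of $v$ up to factors $\lambda^{d/2}$. We then show $v_\lambda|_\R\to v$ in $\Hyp$ as $\lambda\to1$. This follows from continuity of dilations in Bochner spaces $L^2(\R;X)$: approximate by continuous compactly supported functions, for both $v\in L^2(\R;H^1(\Sd))$ and $g=s\cdot\gradR v\in L^2(\R;H^{-1}(\Sd))$, using $(s\cdot\gradR v_\lambda)=\lambda^{-1}g(\cdot/\lambda)$.

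Second, for fixed $\lambda$ we mollify in space only. Let $\moll_\epsilon$ be a standard mollifier on $\Rd$ with $\epsilon$ smaller than the margin. Then $w=\moll_\epsilon*_r v_\lambda$ is well defined and smooth in $r$ on a neighbourhood of $\overline\R$ with values in $H^1(\Sd)$. Since the convolution acts only in $r$ and $s$ is a parameter, it commutes with $s\cdot\gradR$: $s\cdot\gradR w=\moll_\epsilon*_r(s\cdot\gradR v_\lambda)$ in $L^2(\R;H^{-1}(\Sd))$. This identity should be checked by testing against $\phi\in C_c^\infty(\R)\otimes H^1(\Sd)$, where the multiplication by $s$ is handled in the duality pairing in $s$; that is fine because $s\mapsto s_i$ is smooth on $\Sd$ and hence a multiplier on $H^{\pm1}$. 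Standard Bochner mollification gives $w\to v_\lambda$ in both norm components on $\R$ as $\epsilon\to0$.

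Third, we regularize in $s$ by applying a smoothing operator $P_\delta$ on $\Sd$, for instance the heat semigroup $e^{\delta\Delta_{\Sd}}$ or a rotational mollifier $\int_{SO(d)}\eta_\delta(Q)\,w(r,Q^{-1}s)\,\dd Q$. This is the step I expect to be the main obstacle, because $P_\delta$ must nearly commute with multiplication by $s$ in $s\cdot\gradR$. With the rotational mollifier the commutator is explicit: $s\cdot\gradR$ applied to the rotated function gives $(Qs')\cdot\gradR$ rather than $s'\cdot\gradR$. Since $w$ is already smooth in $r$, however, $\gradR w\in L^2(\R;H^1(\Sd))$ on $\overline\R$, so $s\cdot\gradR w$ lies even in $L^2(\R;L^2(\Sd))$. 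We can therefore simply argue that $P_\delta w\to w$ in $L^2(\R;H^1(\Sd))$ and $s\cdot\gradR P_\delta w=\sum_i P'_\delta(s_i\partial_{r_i}w)+$ (commutator) $\to s\cdot\gradR w$ in $L^2$. Both hold by strong continuity of $P_\delta$ on $H^1(\Sd)$ and $L^2(\Sd)$ applied to the smooth-in-$r$ function $\partial_{r_i}w$, with the commutator $[P_\delta,s_i]$ tending to $0$ strongly. The result is smooth in $(r,s)$ up to the boundary of $\R$.

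A diagonal argument in $\lambda,\epsilon,\delta$ concludes the proof.
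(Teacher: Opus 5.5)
Your proposal is correct and follows essentially the same route as the paper: you dilate about the centre $a_s$ (your $v(r/\lambda,\cdot)$ with $\lambda>1$ is the paper's $v\circ T_{1/\lambda}$), mollify in $r$ within the resulting margin $(\lambda-1)\rho_s$, apply the spherical heat semigroup $P_\delta$ fibre-wise, and conclude with a diagonal argument. The commutator $[P_\delta,s_i]$ is not needed in the last step: since $\gradR w\in L^2(\R\times\Sd)$ after mollification, the paper simply uses $\gradR P_\delta w=P_\delta\gradR w\to\gradR w$ in $L^2(\R\times\Sd)$ together with the boundedness of multiplication by $s$ from $L^2(\Sd;\RR^d)$ to $H^{-1}(\Sd)$.
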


To streamline the proof of this result, we first present two auxiliary lemmas whose assertions are quite standard. For any given $\lambda \in (0,1)$ we define the global dilation map 
\begin{align} \label{eq:dilation}
T_\lambda(r)=a_s+\lambda(r-a_s), \quad r\in\Rd.
\end{align}
As a first step, we show that $T_\lambda(\overline{\R})$ is compactly included in $\R$ which allows us to apply usual mollification arguments; cf. Figure~\ref{fig:exit-time} for a sketch.
\begin{lemma}[Compact inclusion after dilation]
\label{lem:dilation} $ $\\
Let $\R\subset\mathbb{R}^d$ be star-shaped with respect to the ball
$B(a_s,\rho_s)$ and $\lambda\in(0,1)$. Then
\begin{align}
B\big(T_\lambda(r),(1-\lambda)\rho_s\big)
\subset\R
\qquad\text{for every }r\in\overline\R.
\label{eq:dilation1}
\end{align}
As a consequence, we have $T_\lambda(\overline\R)\subset\R$ and $
\dist\bigl(T_\lambda(\overline\R),\partial\R\bigr)
\ge (1-\lambda)\rho_s$.
\end{lemma}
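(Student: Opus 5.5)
The plan is to prove \eqref{eq:dilation1} by writing every point of the small ball as a convex combination of a point of $\R$ and a point of $B(a_s,\rho_s)$, and then to pass from points $r\in\R$ to $r\in\overline\R$. For $r\in\R$ and $y\in B(T_\lambda(r),(1-\lambda)\rho_s)$, I write $y=T_\lambda(r)+(1-\lambda)\rho_s w$ with $|w|<1$. Using $T_\lambda(r)=\lambda r+(1-\lambda)a_s$, this becomes
\begin{align*}
y=\lambda r+(1-\lambda)\bigl(a_s+\rho_s w\bigr).
\end{align*}
Since $a_s+\rho_s w\in B(a_s,\rho_s)$, the point $y$ lies in the convex hull of $\{r\}\cup B(a_s,\rho_s)$. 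By \eqref{eq:star-shaped} this hull is contained in $\R$, so $y\in\R$. This settles the claim for $r\in\R$.

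For $r\in\overline\R$ I argue by approximation. I pick $r_k\in\R$ with $r_k\to r$. Then $T_\lambda(r_k)\to T_\lambda(r)$, because $T_\lambda$ is a contraction with Lipschitz constant $\lambda$. Given $y$ with $|y-T_\lambda(r)|<(1-\lambda)\rho_s$, the strict inequality leaves some slack $\delta>0$. Choosing $k$ with $\lambda|r_k-r|<\delta$ gives $|y-T_\lambda(r_k)|<(1-\lambda)\rho_s$, so the first step yields $y\in\R$. Equivalently, $B(T_\lambda(r),(1-\lambda)\rho_s)$ is the union over $k$ of the sets $B(T_\lambda(r),(1-\lambda)\rho_s-\lambda|r_k-r|)$, each of which is contained in $B(T_\lambda(r_k),(1-\lambda)\rho_s)\subset\R$.

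The consequences then follow directly. Each $T_\lambda(r)$ with $r\in\overline\R$ is the center of a ball of radius $(1-\lambda)\rho_s$ contained in $\R$. Hence $T_\lambda(r)\in\R$, and every point of $\partial\R$, which lies outside the open set $\R$, has distance at least $(1-\lambda)\rho_s$ from $T_\lambda(r)$. Taking the infimum over $r\in\overline\R$ gives the stated distance bound.

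The only step needing care is the passage to boundary points $r\in\overline\R$, since the definition \eqref{eq:star-shaped} is formulated only for $r\in\R$. The approximation argument above, which relies on the strict radius inequality and the contraction property of $T_\lambda$, handles this step.
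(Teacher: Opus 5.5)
Your proposal is correct and follows the paper's proof. The paper uses the same convex-combination decomposition $T_\lambda(r)+z=\lambda r+(1-\lambda)\bigl(a_s+z/(1-\lambda)\bigr)$ for $r\in\R$ and then passes to $r\in\overline\R$; your slack-plus-contraction argument just spells out the paper's one-line appeal to continuity of $T_\lambda$.
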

\begin{proof}
We first consider $r\in\R$ and let $|z| < (1-\lambda)\rho_s$. Then we can expand
\begin{align*}
T_\lambda(r)+z 
= \lambda r + (1-\lambda)
\big(a_s+\tfrac{z}{1-\lambda}\big).
\end{align*}
Since $\big|\frac{z}{1-\lambda}\big|<\rho_s$,
we see that $a_s+\frac{z}{1-\lambda}\in B(a_s,\rho_s)$.
Thus $T_\lambda(r)+z$ is a convex combination of $r$ and a
point in $B(a_s,\rho_s)$. Since $\R$ is star-shaped with respect
to $B(a_s,\rho_s)$, this already implies the inclusion \eqref{eq:dilation1} for $r \in \R$. 
By continuity of $T_\lambda(\cdot)$ and $\lambda \in (0,1)$, the inclusion remains valid for all $r \in \overline \R$. 
The remaining claims now follow immediately.
\end{proof}

As a second technical tool, we adapt the classical proof of continuity of translation in $L^p$, see e.g.
\cite[Thm.~4.15]{Alt2016}, to the dilation $T_\lambda$ introduced above.
\begin{lemma}[Continuity under dilation]\label{lem:dilation2} $ $\\
Let $X$ be some Banach space.
Then, for every $f\in L^2(\R;X)$, we have 
\begin{align} \label{eq:dilation2}
f\circ T_\lambda \to f \quad \text{in } L^2(\R;X)  \quad \text{as } \lambda\to 1.
\end{align}
\end{lemma}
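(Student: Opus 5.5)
The plan is to follow the classical density argument for continuity of translation in $L^p$. We first prove \eqref{eq:dilation2} for a dense class of nice functions, then show that the maps $f\mapsto f\circ T_\lambda$ are uniformly bounded on $L^2(\R;X)$, and finally combine the two by an $\varepsilon/3$ argument.

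\emph{Uniform boundedness.} By Lemma~\ref{lem:dilation}, $T_\lambda(\R)\subset\R$, so $f\circ T_\lambda$ is well defined on $\R$ for every $f\in L^2(\R;X)$. Since $T_\lambda$ is affine with Jacobian $\lambda^d$, the change of variables $y=T_\lambda(r)$ gives
\begin{align*}
\|f\circ T_\lambda\|_{L^2(\R;X)}^2=\int_\R \|f(T_\lambda r)\|_X^2\,\dd r=\lambda^{-d}\int_{T_\lambda(\R)}\|f(y)\|_X^2\,\dd y\le \lambda^{-d}\|f\|_{L^2(\R;X)}^2 .
\end{align*}
For $\lambda\in[1/2,1)$ this yields the uniform bound $\|f\circ T_\lambda\|\le 2^{d/2}\|f\|$. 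Measurability of $f\circ T_\lambda$ in the Bochner sense is clear, since $T_\lambda$ is a diffeomorphism and simple functions are mapped to simple functions.

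\emph{Dense class.} Functions $g\in C_c(\R;X)$ are dense in $L^2(\R;X)$. One way to see this: simple functions $\sum \chi_{E_i}x_i$ are dense, and each $\chi_{E_i}$ can be approximated in $L^2(\R)$ by $C_c(\R)$ functions. For such $g$, the function $g$ is uniformly continuous and $|T_\lambda r-r|=(1-\lambda)|r-a_s|\le(1-\lambda)\operatorname{diam}$-type bound on the bounded set $\R$, so $g\circ T_\lambda\to g$ uniformly on $\R$. Both functions are defined on $\R$, which has finite measure. Uniform convergence therefore gives $L^2(\R;X)$ convergence.

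\emph{Conclusion.} Given $\varepsilon>0$, choose $g\in C_c(\R;X)$ with $\|f-g\|\le\varepsilon$. Then
\begin{align*}
\|f\circ T_\lambda-f\|\le\|(f-g)\circ T_\lambda\|+\|g\circ T_\lambda-g\|+\|g-f\|\le(2^{d/2}+1)\varepsilon+\|g\circ T_\lambda-g\|,
\end{align*}
and the last term tends to zero as $\lambda\to1$.

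The only delicate points are the well-definedness step, which is handled by Lemma~\ref{lem:dilation}, and the density of $C_c(\R;X)$ in the Bochner space. The latter is standard, and I would simply cite it. Neither step is a real obstacle.
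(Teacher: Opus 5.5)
Your proof is correct and follows essentially the same route as the paper. A change of variables gives the uniform bound $\|f\circ T_\lambda\|_{L^2(\R;X)}\le 2^{d/2}\|f\|_{L^2(\R;X)}$ for $\lambda\ge 1/2$, uniform continuity gives convergence on a dense class of continuous functions, and an $\varepsilon/3$ argument concludes. The differences are cosmetic: you take $C_c(\R;X)$ as the dense class rather than the paper's $C(\overline\R;X)$, and you spell out the well-definedness ($T_\lambda(\R)\subset\R$) and the Bochner measurability of $f\circ T_\lambda$, which the paper leaves implicit.
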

\begin{proof}
%% Step~1:
Let $C_\lambda f=f\circ T_\lambda$. Then for all $\lambda\in[\tfrac{1}{2},1]$, we see that
\begin{align} \label{eq:step1}
    \|C_\lambda f\|_{L^2(\R;X)}^2
    \ =\
    \int_\R\|f(T_\lambda(r))\|_X^2\,\mathrm dr
    \ =\
    \lambda^{-d}\int_{T_\lambda(\R)}\|f(r')\|_X^2\,\mathrm dr'
    \ \le\
    2^{d}\,\|f\|_{L^2(\R;X)}^2.
\end{align}
In the second step, we used the transformation formula for integrals and the fact that $T_\lambda$ is differentiable with $\operatorname{det}(D T_\lambda(r)) = \lambda^d$, and in the third that $\lambda \ge 1/2$. 
This shows that the composition maps $C_\lambda$ are uniformly bounded on $L^2(\R;X)$ for $\lambda$ close to one. 

%% Step~2:
From \eqref{eq:dilation}, we get
$\sup_{r\in\overline\R}|T_\lambda(r)-r| \leq |\lambda-1|\operatorname{diam}(\R)$. 
Since any $\phi\in C(\overline\R;X)$ is uniformly continuous on the compact set $\overline\R$,
we can find for each $\epsilon>0$ a $\lambda_\phi(\epsilon)\in(0,1)$ such that
\begin{align} \label{eq:step2}
    \|C_\lambda\phi-\phi\|_{L^2(\R;X)} 
    < \epsilon  \qquad\text{for all } \lambda \in [\lambda_\phi(\epsilon),1].
\end{align}
%
%% Step~3:
For given $f\in L^2(\R;X)$ and $\epsilon>0$, we may choose $\phi\in C(\overline\R;X)$ with
$\|f-\phi\|_{L^2(\R;X)}<\epsilon$; this is possible because $C(\overline\R;X)$ is dense in $L^2(\R;X)$ for a Banach space $X$; cf. \cite[Thm.~4.15]{Alt2016}.
Then from \eqref{eq:step1} and \eqref{eq:step2}, and triangle inequalities, we further deduce that 
\begin{align*}
    \|C_\lambda f-f\|_{L^2(\R;X)}
    \ &\le\
    \|C_\lambda(f-\phi)\|_{L^2(\R;X)}+\|C_\lambda\phi-\phi\|_{L^2(\R;X)}+\|\phi-f\|_{L^2(\R;X)}
    \\
    &\le\
    2^{d/2}\epsilon+\epsilon+\epsilon.
\end{align*}
Since $\epsilon>0$ was arbitrary, we conclude that $C_\lambda f\to f$ in $L^2(\R;X)$ as $\lambda\to 1$.
\end{proof}

\paragraph{Proof of Proposition~\ref{pro:density}} $ $\\
We can now employ a standard line of arguments.

\smallskip 

\noindent
\emph{Step 1: Dilation.}
Let $v_\lambda = v \circ T_\lambda = C_\lambda v$. 
Applying Lemma~\ref{lem:dilation2} with $X=H^1(\Sd)$ gives $v_\lambda=C_\lambda v\to v$ in $L^2(\R;H^1(\Sd))$. 
The change-of-variables $r'=T_\lambda(r)$ further shows that 
$$ (s\cdot\nabla_r v_\lambda)(r,s)= \lambda\,(s\cdot\nabla_r v)(T_\lambda(r),s)
$$ 
in the sense of distributions. 
Hence, by Lemma~\ref{lem:dilation2} with
$X=H^{-1}(\Sd)$, we see that
\begin{align*}
\|s\cdot\nabla_r v_\lambda&-s\cdot\nabla_rv\|_{L^2(\R;H^{-1}(\Sd))} \\
&\le
\lambda
\|C_\lambda(s\cdot\nabla_rv)-s\cdot\nabla_rv\|_{L^2(\R;H^{-1}(\Sd))}
+(1-\lambda)
\|s\cdot\nabla_rv\|_{L^2(\R;H^{-1}(\Sd))}
\end{align*}
which goes to zero with $\lambda \to 1$. 
Hence we have shown that $
v_\lambda\to v$ in $\Hyp$ with $\lambda \to 1$.

\smallskip 

\noindent 
\emph{Step 2: Mollification in $r$.}
We fix $\lambda \in (0,1)$ and 
define $v_{\lambda,\varepsilon}=\moll_\varepsilon *_r v_\lambda$ for a
standard Euclidean mollifier $\moll_\varepsilon$ on $\mathbb R^d$ acting on the $r$ variable, see \cite[Ch.~4]{Alt2016}; the variable $s\in\Sd$ here acts as a fixed parameter.
By Lemma~\ref{lem:dilation}, we know that $\operatorname{dist}(T_\lambda(\overline\R),\partial\R)\ge(1-\lambda)\rho_s$, so that $v_{\lambda,\varepsilon}$ is well-defined for  $0 < \varepsilon < (1-\lambda) \rho_s$.
Moreover, the convolution in the $r$-variable commutes with distributional differentiation, i.e.
\begin{align*}
s\cdot\nabla_r(\moll_\varepsilon*_r v_\lambda)
=
\moll_\varepsilon*_r(s\cdot\nabla_r v_\lambda) \quad\text{on }\R\times\Sd,
\end{align*}
and it is strongly continuous on
$L^2(\R;H^1(\Sd))$ and on
$L^2(\R;H^{-1}(\Sd))$, since it acts only in the Euclidean variable $r$; see \cite[Thm~4.15(2)]{Alt2016}. 
Since multiplication with $s$ is continuous on $L^2(\R;H^{-1}(\Sd))$, we then obtain with $\varepsilon \to 0$ the convergences
\begin{alignat*}{3}
v_{\lambda,\varepsilon} &\to v_\lambda &\quad&\text{in }L^2(\R;H^{1}(\Sd)),\\
s\cdot\nabla_r v_{\lambda,\varepsilon} &\to s\cdot\nabla_r v_\lambda &&\text{in }L^2(\R;H^{-1}(\Sd)).
\end{alignat*}

\noindent
\emph{Step 3: Spherical smoothing.}
Let $P_\delta=e^{\delta\Delta_s}$, $\delta>0$,
denote the heat semigroup on $\Sd$ and note that $P_\delta$ is 
strongly-continuous and contractive on both $H^1(\Sd)$ and $L^2(\Sd)$. 
We apply $P_\delta$ fibre-wise and conclude with $\delta \to 0$ the convergence of
\begin{align*}
v_{\lambda,\varepsilon,\delta}
:=P_\delta v_{\lambda,\varepsilon} 
\to
v_{\lambda,\varepsilon}
\quad \text{in } L^2(\R;H^1(\Sd)).
\end{align*}
Note that for fixed $\lambda,\varepsilon,\delta>0$, the function 
$v_{\lambda,\varepsilon,\delta}$ is smooth in both variables $r$ and $s$.
Since $P_\delta$ and $\nabla_r$ act on different variables, we further get $
\nabla_rv_{\lambda,\varepsilon,\delta}
=
P_\delta(\nabla_rv_{\lambda,\varepsilon}),
$
and 
\begin{align*}
\nabla_rv_{\lambda,\varepsilon,\delta}
\to
\nabla_rv_{\lambda,\varepsilon}
\quad\text{in }L^2(\R\times\Sd).
\end{align*}
Since multiplication by $s$ is bounded from
$L^2(\Sd;\mathbb R^d)$ to $H^{-1}(\Sd)$, this immediately implies $
s\cdot\nabla_rv_{\lambda,\varepsilon,\delta}
\to
s\cdot\nabla_rv_{\lambda,\varepsilon}$
in $L^2(\R;H^{-1}(\Sd))$ and, hence, 
$v_{\lambda,\varepsilon,\delta}
\to
v_{\lambda,\varepsilon}$ in $\Hyp$.

\smallskip 
\noindent 
\emph{Step 4: Diagonal sequence.}
Choosing $\lambda_n \nearrow 1$, $\varepsilon_n=\varepsilon(\lambda_n)\searrow 0$ and $\delta_n=\delta(\lambda_n,\varepsilon_n)\searrow0$ appropriately gives a diagonal sequence $v_{\lambda_n,\varepsilon_n,\delta_n}\in
C^\infty(\overline\R\times\Sd)$ converging to $v$ in $\Hyp$.\qed

\paragraph{Proof of Theorem~\ref{thm:density}} $ $\\
We employ the localization argument used in the proof of \cite[Prop.~2.2]{AlbrittonEtAl24}.  
Every bounded Lipschitz domain is locally star-shaped with respect to some balls, i.e., there exists a finite open covering $\R \subset \bigcup_k B(r_k,\rho_k')$ such that each subdomain $\R_k = \R \cap B(r_k,\rho_k')$ is star-shaped with respect to some ball $B(a_k,\rho_k)$.
We define $v_k=v|_{\R_k}$ and see that $v_k$ can be approximated by smooth functions $\tilde v_k$ using Proposition~\ref{pro:density}.
We further denote by $1=\sum_k \chi_k$ a partition of unity, i.e., a set of functions $\chi_k \in C^\infty$ with $0 \le \chi_k \le 1$ and $\text{supp}(\chi_k) \subset B(r_k,\rho_k')$. 
Then $\tilde v = \sum_k \chi_k \tilde v_k$ is smooth and approximates $v = \sum_k \chi_k v_k$ as desired. \qed

\section{Possible extensions}
\label{sec:discussion}

Following the arguments of \cite[Prop.~6.1]{AlbrittonEtAl24} and \cite[Prop.~3.1]{BrunkenSmetana22}, the density result stated in Theorem~\ref{thm:density} can be generalized to the time dependent kinetic Sobolev spaces 
\begin{align*}
H^1_{kin}
=
\left\{
v\in L^2(I\times\R;H^1(\Sd)):
(\partial_t+s\cdot\nabla_r)v
\in L^2(I\times\R;H^{-1}(\Sd))
\right\},
\end{align*}
where $I=(0,T)$ denotes a time interval.
By integration over space-time characteristics $(t+\theta,r+\theta s)$, $\theta\in\mathbb R$, the existence of traces on $\partial(I\times\R)\times\Sd$ for functions in $H^1_{kin}$ and a bound corresponding to \eqref{eq:trace2} can be established with similar arguments as above. To that end, we note that the exit times $\tilde\tau_\pm(t,r,s)$ for the space-time cylinder $I\times \R$ satisfy
$$
\tilde\tau_+(t,r,s)=\min(\tau_+(r,s),T-t),\qquad 
\tilde\tau_-(t,r,s)=\min(\tau_-(r,s),t).
$$
Therefore,
$\tilde\tau_\pm(t+\theta,r+\theta s,s)=\tilde\tau_\pm(t,r,s)\mp\theta$
and
$\tilde\tau_\pm(t,r,s)$ are Lipschitz-continuous functions of $s$ with Lipschitz constant $2\rho$ for fixed $t$ and $r$ and $\rho$-convex domain $\R$, by Lemma~\ref{lem:tau-p}.

A more challenging question concerns the extension of the trace estimates to domains which are not uniformly convex or do not possess a smooth boundary. 
As noted in \cite{NV26}, the Lipschitz estimate of Lemma~\ref{lem:tau-p} does in general not hold in such cases. 
In a forth-coming publication, we will use different methods of proof to establish quantitative trace estimates 
\begin{align}\label{eq:trace3}
\int_{\partial\R \times \Sd} |v(r,s)|^2 \, w(r,s) \, |s\cdot n(r)| \, \dd(r,s) 
\leq C \, \|v\|_{\hyp}^2.
\end{align}
with appropriate weight functions $w$ for polyhedral domains $\R$ in dimension $d=2$ and $3$.

\bibliographystyle{plain}
\bibliography{references_density_trace}
\end{document}

%% file: r-convex.tikz
\begin{tikzpicture}[font=\normalsize,scale=0.8]

% ============================================================
% LEFT: disk
% ============================================================

\pgfmathsetmacro{\R}{2}
\pgfmathsetmacro{\r}{0.07}

\begin{scope}[xshift=0cm,yshift=0cm,scale=1.35]

  % center of the disk
  \coordinate (a) at (0,0);
  \fill (a) circle (0.07);
  \node[below left] at (a) {$a$};

  % disk
  \draw[very thick] (a) circle (2);

  % radius rho
  %\coordinate (b) at (1.4142,-1.4142);
  \coordinate (b) at (1.7321,-1);
  \fill (b) circle (0.07);
  \draw[thick,dashed] (a) -- (b)  node[pos=0.55,above] {$\rho$};

  \coordinate (c) at (-2,0); 
  \fill (c) circle (0.07);
  \node[left] at (c) {$r_-$};

  \coordinate (d) at (1.4142,1.4142); 
  \fill (d) circle (0.07);
  \node[right] at (d) {$r_+$};
 
  %\coordinate (r) at (-0.9757,0.4243);
  \coordinate (r) at (-0.6343,0.5657);
  \fill (r) circle (0.07);
  \node[below right] at (r) {$r$};

  \draw[thick] (c) -- (r) node[pos=0.55,above=-2pt,rotate=22,font=\scriptsize] {$\tau_-(r,s)$};
  \draw[thick] (r) -- (d) node[pos=0.5,above=-2pt,rotate=22,font=\scriptsize] {$\tau_+(r,s)$};

  \draw[thick,->] (0.1192,0.6778) -- (0.8021,0.9606) node[pos=0.55,below] {$s$};

\end{scope}

% ============================================================
% RIGHT: Reuleaux triangle
% ============================================================

\begin{scope}[xshift=5.5cm,yshift=-2cm,scale=1.35]

  \pgfmathsetmacro{\side}{3.8}

  \coordinate (V1) at (0,0);
  \fill (V1) circle (0.07);
  \node[below left] at (V1) {$a_1$};

  \coordinate (V2) at (\side,0);
  \fill (V2) circle (0.07);
  \node[below right] at (V2) {$a_2$};

  \coordinate (V3) at (\side/2,{sqrt(3)/2*\side});
  \fill (V3) circle (0.07);
  \node[above] at (V3) {$a_3$};

  % Reuleaux triangle
  \draw[very thick]
    ($(V1)+(0:\side)$) arc (0:60:\side);

  \draw[very thick]
    ($(V2)+(120:\side)$) arc (120:180:\side);

  \draw[very thick]
    ($(V3)+(240:\side)$) arc (240:300:\side);

  % radius rho
  \coordinate (M) at ($(V1)+(30:\side)$);
  \fill (M) circle (0.07);

  \draw[thick,dashed] (V1) -- (M)
    node[pos=0.55,above left] {$\rho$};

  \draw[thick,dotted] (V1) -- (V2) -- (V3) -- (V1);
%    node[pos=0.55,above left,font=\scriptsize] {$\rho$};

  % % caption
  % \node[below=0.35cm,font=\footnotesize]
  %   at (1,0) {Reuleaux triangle of width $\rho$};

\end{scope}

\end{tikzpicture}

%% file: exit-time.tikz
\begin{tikzpicture}[scale=1.6,>=stealth]
\begin{scope}[xshift=0cm,yshift=1cm,scale=1.35]
  % parameters (names chosen to avoid clobbering \rho, \tau, ...)
  \pgfmathsetmacro{\Rad}{2}
  \pgfmathsetmacro{\rlen}{\Rad/2}
  \pgfmathsetmacro{\rang}{135}
  \pgfmathsetmacro{\sang}{68}
  \pgfmathsetmacro{\Rx}{\rlen*cos(\rang)}
  \pgfmathsetmacro{\Ry}{\rlen*sin(\rang)}
  \pgfmathsetmacro{\sxv}{cos(\sang)}
  \pgfmathsetmacro{\syv}{sin(\sang)}
  \pgfmathsetmacro{\dotp}{\Rx*\sxv+\Ry*\syv}
  \pgfmathsetmacro{\hsq}{\Rx*\Rx+\Ry*\Ry-\dotp*\dotp}
  \pgfmathsetmacro{\sqterm}{sqrt(\Rad*\Rad-\hsq)}
  \pgfmathsetmacro{\tauv}{\sqterm-\dotp}
  \pgfmathsetmacro{\Ex}{\Rx+\tauv*\sxv}
  \pgfmathsetmacro{\Ey}{\Ry+\tauv*\syv}
  \pgfmathsetmacro{\Px}{\Rx-\dotp*\sxv}
  \pgfmathsetmacro{\Py}{\Ry-\dotp*\syv}

  \coordinate (A) at (0,0);
  \coordinate (R) at (\Rx,\Ry);
  \coordinate (Exit) at (\Ex,\Ey);
  \coordinate (P) at (\Px,\Py);

  % circle, clipped to the region of interest (avoids a mostly-empty full disk)
  \begin{scope}
    \clip (-1.55,-0.15) rectangle (0.5,2.28);
    \draw[thick] (A) circle (\Rad);
  \end{scope}
  % \useasboundingbox (-1.55,-0.15) rectangle (0.5,2.28);

  % completion (dotted)
  \draw[dotted,thick] (A) -- (P) node[pos=0.5,below,font=\footnotesize] {$x$};
  \draw[dotted,thick] (P) -- (R) node[pos=0.5,left,font=\footnotesize] {$y$};

  % original triangle (solid)
  \draw[thick] (R) -- (A);
  \draw[thick] (A) -- (Exit);
  \draw[thick] (R) -- (Exit);

  % points
  \fill (A) circle (0.035) node[below right=-1pt] {$a$};
  \fill (R) circle (0.035) node[above left=-1pt] {$r$};
  \fill (Exit) circle (0.035) node[above=2pt] {$r_{+,a}$};
  \fill (P) circle (0.035) node[below left=-1pt] {$p$};

  % separate direction arrow for s, offset to the top left of the tau_a edge
  \draw[->,thick] ($(R)+(\sang:0.5)+(\sang+90:-0.1)$) -- ++(\sang:0.32)
    node[below left=5pt,rotate=80,font=\footnotesize] {$s$};

  % labels: 
  \draw[thick] (R) -- (A) node[pos=0.4,sloped,above=-1pt,font=\footnotesize] {$|r-a|$};
  \draw[thick] (R) -- (Exit) node[pos=0.5,above,sloped] {$\tau_{+,a}$};
  \draw[thick] (A) -- (Exit) node[pos=0.5,right] {$\rho$};

  % \node[rotate=\sang] at ($(P)+(\sang:-0.15)+(\sang+90:0.05)$)    {$x$};
  % \node[rotate=\sang,font=\scriptsize] at ($(P)+(\sang:-0.15)+(\sang+90:0.24)$)
  %   {$|\langle r-a,s\rangle|$};

% ============================================================
\end{scope}
\begin{scope}[xshift=3.5cm,yshift=1.8cm,scale=0.75]

  % ------------------------------
  % six vertices: three outer and three inner
  % ------------------------------

  \pgfmathsetmacro{\Rout}{2.5}
  \pgfmathsetmacro{\Rin}{0.6}
  \pgfmathsetmacro{\rhoS}{0.5}

  % dilation/contraction parameter
  \pgfmathsetmacro{\lam}{0.8}

  \coordinate (S1) at (90:\Rout);
  \coordinate (S2) at (30:\Rin);
  \coordinate (S3) at (-30:\Rout);
  \coordinate (S4) at (-90:\Rin);
  \coordinate (S5) at (-150:\Rout);
  \coordinate (S6) at (150:\Rin);

  % --------------------------------------
  % center of the star-shaped ball
  % --------------------------------------

  \coordinate (as2) at (0,0);

  % --------------------------------------
  % original star-shaped domain
  % --------------------------------------

  \draw[very thick]
    (S1) -- (S2) -- (S3) -- (S4) -- (S5) -- (S6) -- cycle;

  % --------------------------------------
  % dilated star-shaped domain
  % r -> a_s + lambda (r-a_s)
  % --------------------------------------

  \coordinate (S1d) at ($(as2)!\lam!(S1)$);
  \coordinate (S2d) at ($(as2)!\lam!(S2)$);
  \coordinate (S3d) at ($(as2)!\lam!(S3)$);
  \coordinate (S4d) at ($(as2)!\lam!(S4)$);
  \coordinate (S5d) at ($(as2)!\lam!(S5)$);
  \coordinate (S6d) at ($(as2)!\lam!(S6)$);

  \draw[thick,dashed]
    (S1d) -- (S2d) -- (S3d) -- (S4d) -- (S5d) -- (S6d) -- cycle;

  % --------------------------------------
  % center of the star-shaped ball
  % --------------------------------------

  \fill (as2) circle (0.06);
  \node[below left=-3pt] at (as2) {$a_s$};

  % ball
  \draw[thick,dashed]
    (as2) circle (\rhoS);

  % radius
  \draw[thick,dashed]
    (as2) -- ++(30:\rhoS)
    node[pos=1,above left=-2pt,font=\small,rotate=25] {$\rho_s$};

  \fill (0.45,0.26) circle (0.06);

\end{scope}
  
\end{tikzpicture}